\newtheorem {theorem} {Theorem}
\newtheorem {proposition}{Proposition}
\newtheorem {lemma}{Lemma}
\newtheorem {definition} {Definition}
\newtheorem {remark}{Remark}
\newtheorem {example} {Example}
\begin{document}
\title
{Coordinate-independent criteria for Hopf bifurcations}

\author
{Niclas Kruff and Sebastian Walcher}

\address{Lehrstuhl A f\"ur Mathematik, RWTH
Aachen, 52056 Aachen, Germany}

\begin{abstract}
We discuss the occurrence of Poincar\'e-Andronov-Hopf bifurcations in parameter dependent ordinary differential equations, with no a priori assumptions on special coordinates. The first problem is to determine critical parameter values from which such bifurcations may emanate; a solution for this  problem was given by W.-M.~Liu. We add a few observations from a different perspective. Then we turn to the second problem, viz., to compute the relevant coefficients which determine the nature of the Hopf bifurcation. As shown by J.~Scheurle and co-authors, this can be reduced to the computation of Poincar\'e-Dulac normal forms (in arbitrary coordinates) and subsequent reduction, but feasibility problems quickly arise. In the present paper we present a streamlined and less computationally involved approach to the computations. The efficiency and usefulness of the method is illustrated by examples.\\
{{\bf MSC} (2010): 34C20, 34C23, 37G15.\\
{\bf Keywords}: normal form, stability, characteristic polynomial, FitzHugh-Nagumo equation, predator-prey system}
\end{abstract}

\maketitle

\section{Introduction}
Poincar\'e-Andronov-Hopf bifurcations (briefly called Hopf bifurcations in the present paper) frequently occur in parameter dependent ordinary differential equations, with numerous applications. A classical comprehensive source is Marsden and McCracken \cite{MaMc}. Among the monographs on differential equations and dynamical systems which discuss Hopf bifurcations we mention only a few, viz.  Guckenheimer and Holmes \cite{GuHo}, Thm.~3.4.2, Amann \cite{Amann}, Thm. 26.21, Aulbach \cite{Aulbach}, Satz 7.11.1 and  Chicone \cite{Chi}, Section 8.3 (in particular Thm.~8.25). \\ The familiar statement of the Hopf bifurcation theorem assumes that the system is given in a standardized form which includes (i) a distinguished real bifurcation parameter as well as (ii) a convenient choice of coordinates. Given a suitable real bifurcation parameter, one coefficient in the Poincar\'e-Dulac normal form (which is rational in the coefficients of the Taylor expansion) determines the nature of the bifurcation (subcritical, supercritical or degenerate). The computation of this coefficient is unproblematic in dimension two (and fairly easy in higher dimensions) if the linearization at the stationary point is given in (real) Jordan canonical form.  But for systems that are not normalized, technical problems in the practical implementation arise in several ways. For multi-parameter systems, prior to determining a single parameter so that (i) is satisfied, one has to address the problem of finding the critical parameter values from which Hopf bifurcations emanate (for some choice of a curve through this point in parameter space). This part of the problem is concerned only with the linearization of the vector field, and has been resolved by W.-M.~Liu  \cite{Liu} based on the classical Routh-Hurwitz criteria. For (polynomial) systems which model chemical reaction networks Liu's ansatz taken further by Errami et al. \cite{EEGSSW} who devised an algorithm to find all possible stationary points and critical parameter values for Hopf bifurcations. But Errami et al. did not proceed to determine the nature of the bifurcations. This part of the problem involves nonlinear terms in the Taylor expansion, and this is the part we mainly address in the present paper, with Poincar\'e-Dulac normal forms as the fundamental tool.\\
Normal forms, and the ensuing reductions, were discussed in coordinate-independent settings (including an algorithm for their computation) by Scheurle and Walcher \cite{SW} and Mayer, Scheurle and Walcher \cite{MSW}. In principle these results may be used for the necessary computations to determine the nature of a Hopf bifurcation, but this general approach has the drawback of high computational expense. \\
In the present paper we introduce a more efficient approach to compute the necessary coefficients for the discussion of the nature of a Hopf bifurcation. We first give a brief review of the Hopf bifurcation theorem from the perspective of normal forms and then turn to computations in arbitrary coordinates. We  start by recalling Liu's \cite{Liu} results to determine critical parameter values for Hopf bifurcations, and add a few observations. Then, in the central part (4.2--4.4), we turn to determining the relevant coefficients for a Hopf scenario without computing a full normal form. Finally we present a variant of the approach by  Errami et al. \cite{EEGSSW} that is applicable to arbitrary polynomial systems. To show applicability, we discuss some examples, including a FitzHugh-Nagumo equation and a three-dimensional predator-prey system.

\section{Notation and preliminaries}
We consider a parameter dependent differential equation
\begin{equation}\label{odegen}
\dot x=h(x,\,p)
\end{equation}
defined for $(x,p)$ in some nonempty and open subset of $\mathbb R^n\times\mathbb R^m$, and $h$ of class $\mathcal C^4$. We assume throughout that $x=0$ is a stationary point; thus
\[
h(0,p)=0 \quad\text{ for all   }p.
\]
Our goal is to identify parameter values $p^*$ from which Hopf bifurcations emanate, and to determine the nature of these bifurcations. In general the parameter space will have dimension greater than one, while the Hopf bifurcation theorem refers to a single real parameter. Thus we require that for suitable $v\in \mathbb R^m$ the system
\begin{equation}\label{odeeps}
\dot x=\widehat h(x,\varepsilon):=h(x,\,p^*+\varepsilon v)
\end{equation}
undergoes a Hopf bifurcation at $\varepsilon=0$. (In an apparently more general approach, one could consider curves $\gamma(\varepsilon)=p^*+\varepsilon v +o(\varepsilon)$ in parameter space, but only first order terms in $\varepsilon$ turn out to be relevant.)\\
We recall the notion of Lie derivative: Given an autonomous equation $\dot x=g(x)$ on an open  $U\subseteq \mathbb R^n$ and a differentiable $\sigma:\, U\to \mathbb R$, the Lie derivative $L_g(\sigma)$ of $\sigma$ with respect to $g$ is defined by
\[
L_g(\sigma)(x)=D\sigma(x) \,g(x).
\]
(For parameter dependent equations such as \eqref{odegen}, we will always consider the Lie derivative with respect to $x$.) Lie derivatives will play a central role in the following.
\begin{remark}\label{skrem}The Lie derivative of a linear vector field is well understood; see e.g. \cite{MSW}:
Given a linear map $x\mapsto Bx$, the Lie derivative $L_B$ acts on polynomials $\psi:\, \mathbb R^n\to\mathbb R$, and sends the subspace $S_k$ of homogeneous polynomials of degree $k$ to itself, for every $k\geq 1$.
If $\lambda_1,\ldots,\lambda_n$ are the eigenvalues of $B$ (in the complexification, counted with multiplicity) then the eigenvalues of $L_{B}|_{S_k}$ are
\[
\sum_{j=1}^k k_j\lambda_j, \text{ with nonnegative integers  }k_j \text{  and  } \sum k_j=k.
\]
The eigenspaces of $L_{B}|_{S_1}$ are spanned by linear forms $\mu$ which correspond to the left eigenvectors of $B$. The eigenspaces of $L_{B}|_{S_k}$ for $k>1$ are spanned by products of those linear forms.
\end{remark}
In our setting the map $B$ will be the linearization of \eqref{odeeps} at the stationary point $0$, hence $B=B(p)$ is parameter dependent, and in general the eigenvalues cannot be computed explicitly. But the characteristic polynomial (and the minimal polynomial) of $B$ can be determined, and it coincides with the characteristic polynomial (resp. the minimal polynomial) of $L_{B}|_{S_1}$. Starting from this, one can recursively determine polynomials which annihilate $L_{B}|_{S_k}$ for $k>1$; see \cite{MSW}, with some improvements in \cite{Kru}. In turn, knowledge of an annihilating polynomial will allow to solve a system of linear equations, or to compute projections onto certain subspaces, even in the presence of parameters.
 %%%%%%%%%%%%%%%%%%%%%%%%%%%%%%%%%%%%%%%%%%%%%%%%%%%%%%%%

\section{The bifurcation scenario in suitable coordinates}
In this section we review some known results. We first consider bifurcations from the perspective of Poincar\'e-Dulac normal forms, referring to Bibikov \cite{Bib} and in particular his notion of normal form on an invariant manifold (NFIM, called NFIS in \cite{Bib}). 
Proofs of the Hopf bifurcation theorem need not explicitly rely on Poincar\'e-Dulac normal forms, but these provide insight into their structure and relevant parameters; see Bibikov \cite{Bib}.
We assume that the linearization of \eqref{odeeps} at  $\varepsilon=0$  has a pair of purely imaginary eigenvalues $\pm i\omega\neq 0$, while the remaining eigenvalues have real part $<0$. Moreover there exist coordinates such that the matrix
has the form
\[
\begin{pmatrix} 0&-\omega&0&\cdots &0\\
                        \omega&0&0&\cdots &0\\
                         0&0&*&\cdots &*\\
                          \vdots &\vdots &\vdots &  & \vdots\\
    0&0&*&\cdots &*\\
\end{pmatrix}.
\]
By normal form theory (more precisely, as a consequence of \cite{Bib}, Thm.~3.1), after the introduction of an additional variable $x_0=\varepsilon$ and a further (near-identity) coordinate transformation, the subspace defined by $x_3=\cdots =x_n=0$ is invariant for the Taylor expansion up to degree three, and the Taylor approximation is in NFIM up to degree three, thus we have
\begin{equation}\label{nfim}
\begin{array}{rcccl}
\dot x_0&=& 0 && \\
\dot x_1&=&-\omega x_2&+&x_0(\alpha x_1-\mu x_2)+(x_1^2+x_2^2)(\beta  x_1-\gamma x_2)+x_0^2(\rho x_1-\sigma x_2)\\
\dot x_2&=&\omega x_1&+&x_0(\alpha x_2+\mu x_1)+(x_1^2+x_2^2)(\beta  x_2+\gamma x_1)+x_0^2(\rho x_2+\sigma x_1)\\
\dot x_3&=&\cdots  && \\
    &\vdots&  && \\
\dot x_n&=&\cdots  && \\
\end{array}
\end{equation}
omitting terms of higher degree.
This system allows reduction: Letting $\phi_0:=x_0$ and $\phi_1:=x_1^2+x_2^2$, the map $\Phi=(\phi_0,\phi_1)^{\rm tr}$ sends solutions of this NFIM to solutions of the two-dimensional system
\begin{equation}\label{redeq}
\begin{array}{rcl}
\dot y_0&=& 0\\
\dot y_1&=& 2(\alpha +\rho y_0) y_0y_1+2\beta y_1^2
\end{array}
\end{equation}
From the reduced equation one determines the nature of the bifurcation as $x_0$ crosses $0$; see e.g. Marsden and McCracken \cite{MaMc} for proofs. Note that only solutions with $y_1\geq 0$ are of interest for \eqref{nfim}, since $\phi_1(x)\geq 0$ for all $x$. For easy reference we enumerate all possible cases with $\alpha\not=0$ and $\beta\not=0$. (The list is twice as long as usual, since we do not prescribe ``crossing the imaginary axis from left to right'' for a pair of conjugate eigenvalues.)
\begin{lemma}\label{hopflem} Assume that $\alpha\not=0$ and $\beta\not=0$. Then the following scenarios occur.
\begin{enumerate}[(a)]
\item Given $y_0>0$ such that $\alpha$ and $\alpha +\rho y_0$ have the same signs (which is the case for sufficiently small $y_0$), consider the second equation of \eqref{redeq}.
\begin{enumerate}[(i)]
\item In case $\alpha>0$ the stationary point $0$ is repelling. 
\begin{itemize}
\item When $\beta>0$ there is no further stationary point on the half-line $y_1>0$.
\item When $\beta<0$ there is a unique attracting stationary point at $y_1=-(\alpha+\rho y_0)y_0/\beta>0$, which gives rise to a unique attracting limit cycle for \eqref{nfim}.
\end{itemize}
\item In case $\alpha<0$ the stationary point $0$ is attracting.
\begin{itemize}
\item When $\beta<0$ there is no further stationary point on the half-line $y_1>0$.
\item When $\beta>0$ there is a unique repelling stationary point at $y_1=-(\alpha+\rho y_0)y_0/\beta>0$, which gives rise to a unique repelling limit cycle for \eqref{nfim}.
\end{itemize}
\end{enumerate}
\item  Given $y_0<0$ such that $\alpha$ and $\alpha +\rho y_0$ have the same signs (which is the case for sufficiently small $|y_0|$), consider the second equation of \eqref{redeq}.
\begin{enumerate}[(i)]
\item In case $\alpha>0$ the stationary point $0$ is attracting.
\begin{itemize}
\item When $\beta<0$ there is no further stationary point on the half-line $y_1>0$.
\item When $\beta>0$ there is a unique repelling stationary point at $y_1=-(\alpha+\rho y_0)y_0/\beta>0$, which gives rise to a unique attracting limit cycle for \eqref{nfim}.
\end{itemize}
\item In case $\alpha<0$ the stationary point $0$ is repelling.
\begin{itemize}
\item When $\beta>0$ there is no further stationary point on the half-line $y_1>0$.
\item When $\beta<0$ there is a unique attracting stationary point at $y_1=-(\alpha+\rho y_0)y_0/\beta>0$, which gives rise to a unique attracting limit cycle for \eqref{nfim}.
\end{itemize}
\end{enumerate}
\end{enumerate}
\end{lemma}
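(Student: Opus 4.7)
The plan is to reduce the claim to an elementary phase-line analysis of the scalar ODE in the second equation of \eqref{redeq}, keeping $y_0$ as a fixed constant (which is justified by the first equation $\dot y_0=0$), and then to translate fixed points with $y_1>0$ back into limit cycles of \eqref{nfim} via the preimage under $\Phi=(\phi_0,\phi_1)^{\rm tr}$.

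First I would write the scalar equation in the form $\dot y_1 = y_1\bigl(a+b y_1\bigr)$ with
\[
a := 2(\alpha+\rho y_0)y_0, \qquad b := 2\beta.
\]
The two zeros are $y_1=0$ and $y_1^* = -a/b$. Since $\partial_{y_1}(a y_1+b y_1^2)=a+2b y_1$ equals $a$ at $y_1=0$ and $-a$ at $y_1=y_1^*$, the origin is attracting exactly when $a<0$ (equivalently $\alpha<0$, using the hypothesis that $\alpha$ and $\alpha+\rho y_0$ have the same sign and that $y_0>0$), and the interior fixed point (when it exists) has the opposite stability. The interior fixed point lies in the physically relevant half-line $y_1>0$ iff $a$ and $b$ have opposite signs.

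Next I would simply enumerate the four sign combinations of $\alpha$ and $\beta$ in each of the two regimes $y_0>0$ and $y_0<0$. For $y_0>0$ the sign of $a$ coincides with that of $\alpha$, giving part (a) directly; for $y_0<0$ the sign of $a$ is opposite to that of $\alpha$, which swaps stabilities and produces part (b). In each case the existence and sign of $y_1^*$ and the two stability assertions at $y_1=0$ and $y_1=y_1^*$ are read off from the formulas of the previous paragraph.

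Finally, to pass from a fixed point $y_1^*>0$ of the reduced equation to a limit cycle of \eqref{nfim}, I would invoke the fact that $\Phi$ conjugates solutions of \eqref{nfim} to solutions of \eqref{redeq}: the fibre $\phi_0=y_0$, $\phi_1=y_1^*$ is the circle $x_1^2+x_2^2=y_1^*$ in the slice $x_0=y_0$, which is invariant under the $S^1$-symmetry of the normal form \eqref{nfim}, hence a periodic orbit; its stability in the transverse $y_1$-direction equals that of $y_1^*$ in \eqref{redeq}. There is essentially no obstacle here: the only care needed is to track how the sign of $y_0$ flips the effective coefficient $a$, so that the tables in (a) and (b) are mirror images of each other; everything else is a mechanical verification.
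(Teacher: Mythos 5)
Your phase-line argument is correct, and it is worth noting that the paper itself offers no proof of this lemma at all -- it simply defers to Marsden--McCracken -- so you are supplying the elementary verification the authors chose to omit. Writing the second equation of \eqref{redeq} as $\dot y_1=y_1(a+by_1)$ with $a=2(\alpha+\rho y_0)y_0$, $b=2\beta$, reading off the linearizations $a$ at $y_1=0$ and $-a$ at $y_1^*=-a/b$, and then tracking how the sign of $y_0$ couples to the sign of $\alpha$ is exactly the right mechanism, and your sign bookkeeping reproduces the table. Two small points deserve attention. First, for the claim that the circle $x_1^2+x_2^2=y_1^*$ is an \emph{attracting} limit cycle of the $n$-dimensional system \eqref{nfim} you must also use that the remaining eigenvalues (governing the $x_3,\dots,x_n$ directions transverse to the invariant plane) have negative real part; stability of $y_1^*$ in \eqref{redeq} only controls the radial direction inside that plane, so say this explicitly. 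Second, your own (correct) principle that the cycle inherits the stability of $y_1^*$ actually contradicts the printed statement in case (b)(i) with $\beta>0$: there the interior fixed point is repelling, so the cycle should be a \emph{repelling} limit cycle (within the center directions), not an attracting one as the lemma asserts; comparison with cases (a)(ii) and (b)(ii) strongly suggests this is a typo in the paper rather than an error in your reasoning, but you should flag the discrepancy rather than silently reproduce the claimed conclusion.
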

The Hopf bifurcation theorem (roughly speaking) states that the higher order terms in the NFIM do not affect the local qualitative behavior. We do not state it in full generality here, but just give a basic version:
\begin{theorem}\label{hbt}
Let \eqref{nfim} be a NFIM of system \eqref{odeeps} up to degree $3$, with $\alpha\not=0$, $\beta\not=0$. Then for sufficiently small $\varepsilon=x_0$, the statements in Lemma \ref{hopflem} about stationary points and limit cycles of \eqref{nfim} continue to hold for system \eqref{odeeps} in some neighborhood of $0$.
\end{theorem}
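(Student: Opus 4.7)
The plan is to combine a center manifold reduction with a Poincar\'e return map argument. By hypothesis the linearization of \eqref{odeeps} at $x=0$ has eigenvalues $\pm i\omega$ together with eigenvalues of strictly negative real part. Augmenting the system by $\dot\varepsilon=0$, the center manifold theorem yields a locally invariant, exponentially attracting three--dimensional center manifold $W^c$ which may be parametrized by $(x_0,x_1,x_2)=(\varepsilon,x_1,x_2)$. Every local recurrence — in particular equilibria and limit cycles near the origin — lies on $W^c$, so it suffices to analyze the flow restricted to $W^c$.

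On $W^c$ the vector field coincides with \eqref{nfim} up to and including degree three in $(x_0,x_1,x_2)$ and differs from it only by terms of degree $\ge 4$. Passing to polar coordinates $x_1=r\cos\theta$, $x_2=r\sin\theta$, a direct calculation for the cubic truncation yields $\dot r=(\alpha x_0+\rho x_0^2)r+\beta r^3$ and $\dot\theta=\omega+\mu x_0+\sigma x_0^2+\gamma r^2$, so $\dot\theta$ stays bounded away from zero on a small neighborhood of the origin, and $d(r^2)/dt$ reproduces \eqref{redeq} under $y_1=r^2$. The full flow on $W^c$ adds perturbations $R(x_0,r,\theta)$ and $\Theta(x_0,r,\theta)$ to $\dot r$ and $\dot\theta$ of combined order $\ge 4$.

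Since $\dot\theta\neq 0$ near the origin, we parametrize orbits by $\theta$ and consider the Poincar\'e return map $\Pi_{x_0}$ on the transversal $\{\theta=0\}$. Integrating $dr/d\theta=\dot r/\dot\theta$ over $[0,2\pi]$ and expanding in $(r_0,x_0)$, the displacement function takes the form
\[
d(r_0,x_0):=\Pi_{x_0}(r_0)-r_0=\frac{2\pi}{\omega}\bigl((\alpha x_0+\rho x_0^2)r_0+\beta r_0^3\bigr)+\text{higher order},
\]
the $\theta$--dependent contributions of order $\le 3$ averaging out precisely because \eqref{nfim} is already in NFIM up to degree three. Nontrivial fixed points of $\Pi_{x_0}$ correspond to limit cycles of \eqref{odeeps} on $W^c$, and satisfy the same cubic equation in $r_0$ as the nontrivial equilibria of \eqref{redeq} in $y_1=r_0^2$.

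The conclusion follows by the implicit function theorem. For $\alpha\neq 0$ and $\beta\neq 0$, the nontrivial root $r_0^\ast=\sqrt{-(\alpha+\rho x_0)x_0/\beta}$ of the truncated displacement is simple, hence persists under the higher order perturbation whenever its radicand is positive, producing a unique limit cycle; its stability is governed by the sign of $\partial_{r_0}d(r_0^\ast,x_0)$, which is determined by $\beta$, while the stability of the trivial equilibrium is governed by $\alpha x_0+\rho x_0^2\approx\alpha x_0$. Matching signs case by case reproduces the table in Lemma \ref{hopflem}. The main technical obstacle is the averaging step of the third paragraph: one must verify that the $\theta$--dependent higher order terms neither destroy hyperbolicity of the bifurcating root nor introduce spurious additional fixed points of $\Pi_{x_0}$. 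This is the standard content of the Hopf bifurcation theorem proof as executed in \cite{MaMc} or \cite{Bib}, and is merely quoted here.
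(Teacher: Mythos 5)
The paper does not actually prove Theorem \ref{hbt}: it is presented as a ``basic version'' of the classical Hopf bifurcation theorem, with the reader referred to Marsden--McCracken \cite{MaMc} (and Bibikov \cite{Bib}) for proofs; the only point the paper needs is that the qualitative picture is governed by the NFIM coefficients $\alpha,\beta$. Your proposal, by contrast, sketches the standard proof from scratch: center manifold reduction of the $\varepsilon$-augmented system, polar coordinates on the cubic truncation (your formulas $\dot r=(\alpha x_0+\rho x_0^2)r+\beta r^3$ and $\dot\theta=\omega+\mu x_0+\sigma x_0^2+\gamma r^2$ are correct and do recover \eqref{redeq} under $y_1=r^2$), and a Poincar\'e displacement function analyzed via the implicit function theorem. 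This outline is sound and more informative than the paper's treatment. Two caveats. First, the bifurcating root $r_0^\ast=\sqrt{-(\alpha+\rho x_0)x_0/\beta}$ collapses to $0$ as $x_0\to 0$, so a direct IFT application at fixed $x_0$ is not uniform in the bifurcation parameter; the standard remedy is the rescaling $r=\sqrt{|x_0|}\,u$ (or an equivalent blow-up), after which the root is uniformly simple and persistence plus uniqueness of the fixed point of $\Pi_{x_0}$ follow. Second, you explicitly defer precisely this persistence/uniqueness step to \cite{MaMc} and \cite{Bib} --- which is legitimate, but it means your argument ultimately rests on the same classical citation that the paper invokes for the entire theorem. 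In short: where the paper quotes the result, you reconstruct the standard proof skeleton correctly and then quote the one genuinely hard step.
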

Thus, the qualitative study of \eqref{odeeps} near $\varepsilon=0$ amounts to computing the coefficients $\alpha$ and $\beta$
and checking the list in Lemma \ref{hopflem} (unless a degenerate case with $\alpha=0$ or $\beta =0$ occurs). However, these computations pose a nontrivial obstacle, and this is the motivation for the present work. \\
For systems in suitable coordinates a very detailed account of the normal form computations was given in Marsden and McCracken, \cite{MaMc}, Section 4A (see also Section 5 for a translation of Hopf's original work); later improvements and shortcuts can be found e.g. in Hassard and Wan \cite{HaWa} (in dimension two), and in \cite{WaNFT} (in arbitrary dimension) for the case that the linearization is in (real or complex) Jordan form. \\
But for general systems, an exact determination of eigenvalues or eigenspaces is not possible (a fortiori so when the system is parameter dependent). 
In this sitiuation a coordinate-free approach to compute Poincar\'e-Dulac normal forms (as presented by Scheurle and co-authors in \cite{SW} and \cite{MSW}), followed by a reduction (analogous to passing from \eqref{nfim} to \eqref{redeq}; see \cite{MSW}) opens a path, in principle. (An illustrative example is given in \cite{MSW}, Section 6.) However, feasibility problems quickly arise in higher dimensions, and moreover the normal form contains more information than necessary to compute the coefficients $\alpha,\,\beta$ in \eqref{nfim}. We will therefore show how to streamline the computations. The motivation for the shortcut is the determination of  Lyapunov's focus quantities in the two-dimensional center problem (see e.g. Artes et al. \cite{DLA} Ch.~4).

%%%%%%%%%%%%%%%%%%%%%%%%%%%%%%%%%%%%%%%%%%%%%%%%%%%%%%%%%%%%%%%
\section{The bifurcation scenario in unsuitable coordinates}\label{unsuitsec}
\subsection{Critical parameter values}
The first task is to identify those parameter values $p^*$ of \eqref{odegen} from which Hopf bifurcations may emanate. This amounts to conditions on the eigenvalues of
\begin{equation}\label{odegenlin}
B(p):=Dh(0,p),
\end{equation}
since a Hopf bifurcation emanates from $p=p^*$ only if all eigenvalues of $B(p^*)$  have real part $\leq0$ and there is exactly one complex conjugate pair with real part zero. This problem has been solved by Liu \cite{Liu}, building on classical results by Routh and Hurwitz.\\
We first consider the roots of a general normalized polynomial
\begin{equation}\label{polygen}
\chi(\tau):=\tau^n+ c_1\tau^{n-1}+\cdots + c_n;\quad n\geq 2
\end{equation}
with real coefficients. 
\begin{remark}\label{liurem}
We recall a few (classical and more recent) results on the position of the roots of \eqref{polygen} in the complex plane; see Gantmacher \cite{Gan}, Ch.~V, \S6, and Liu \cite{Liu}.
\begin{itemize}
\item If all roots of $\chi$ have real parts $<0$ then $c_1>0,\ldots,c_n>0$.
\item (Hurwitz/Routh): All zeros of $\chi$ have real parts $<0$ if and only if all Hurwitz determinants $\Delta_1,\ldots,\Delta_{n-1}$ are $>0$.
\item (Extension of the Routh-Hurwitz criterion; Liu \cite{Liu}:)  All roots of $\chi$  have real part $\leq0$ with exactly one complex conjugate pair with real part zero, if and only if
\[
 \Delta_1>0,\ldots,\Delta_{n-2}>0 \text{  and  } \Delta_{n-1}=0.
\]
If the coefficients $c_i$ depend on a real parameter $\varepsilon$, then these conditions at $\varepsilon=0$, together with  $\partial\Delta_{n-1}/\partial\varepsilon|_{\varepsilon=0}\not=0$, are necessary and sufficient for a Hopf bifurcation at $\varepsilon=0$. (Liu requires the derivative to be $>0$, in order to ensure ``crossing from left to right''.)
\end{itemize}
\end{remark}
Although the problem is thus solved in principle, it seems worthwhile to consider the problem from a different perspective.
\begin{lemma}\label{critlem} Let $B=B(p)$ as in \eqref{odegenlin}, and assume that its characteristic polynomial is given by $\chi$ in \eqref{polygen}.
\begin{enumerate}[(a)]
\item Then $B(p)$ is invertible and admits a pair of eigenvalues which add up to zero if and only if $c_n\not=0$ and the characteristic polynomial of $L_{B(p)}|_{S_2}$ has constant coefficient zero.
\item In this case, there is a factorization
\[
\chi(\tau)=\left(\tau^{n-2}+a_1\tau^{n-3}+\cdots + a_{n-2}\right)\cdot\left(\tau^2+b\right).
\]
All roots have real part  $\leq 0$ with exactly one purely imaginary pair if and only if $b>0$ and the first factor satisfies the Hurwitz-Routh conditions.
\item $B(p)$ satisfies Liu's conditions on the roots from Remark \ref{liurem} if and only if
\begin{equation}\label{critcond}
L_{B(p)}|_{S_1}\text{  is injective, } \dim{\rm Ker}\,(L_{B(p)}|_{S_2})=1,
\end{equation}
and the conditions from part (b) hold. In this case the kernel of $L_{B(p)}|_{S_2}$ is spanned by a positive semidefinite quadratic form $\psi$ of rank two.
\end{enumerate}
\end{lemma}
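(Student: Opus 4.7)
The whole lemma is about the eigenvalues $\lambda_1,\ldots,\lambda_n$ of $B(p)$, and my entire plan is to feed them through Remark~\ref{skrem}: the eigenvalues of $L_{B(p)}|_{S_1}$ are the $\lambda_i$, those of $L_{B(p)}|_{S_2}$ are the sums $\lambda_i+\lambda_j$ with $i\le j$, and the corresponding eigenvectors are the left eigenforms $\mu_i$ of $B(p)$ together with their pairwise products $\mu_i\mu_j$. Part (a) then follows directly: $c_n=\pm\prod_i\lambda_i$ is nonzero iff $B(p)$ is invertible, and the constant coefficient of the characteristic polynomial of $L_{B(p)}|_{S_2}$ equals (up to sign) $\prod_{i\le j}(\lambda_i+\lambda_j)$, which vanishes iff some pair of eigenvalues sums to zero; combined, these two conditions produce a pair $\{\lambda_i,-\lambda_i\}$ of nonzero eigenvalues, as asserted.

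For part (b), I would take such a pair and split off the factor $(\tau-\lambda_i)(\tau+\lambda_i)=\tau^2+b$ with $b:=-\lambda_i^2$; this yields the claimed factorization, and $b$ is automatically real whenever $\lambda_i$ is real or purely imaginary. The second assertion of (b) is then straightforward: $\tau^2+b$ contributes a purely imaginary pair of roots iff $b>0$; the remaining $n-2$ roots have negative real part iff the degree-$(n-2)$ factor satisfies the Hurwitz-Routh conditions; together these amount exactly to Liu's hypothesis on $\chi$.

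For part (c), injectivity of $L_{B(p)}|_{S_1}$ reads as $c_n\neq 0$, so part (a) is available. The condition $\dim\ker L_{B(p)}|_{S_2}=1$ singles out a unique pair $\pm\lambda_i$ summing to zero and forces it to be real or purely imaginary, since if $\lambda_i=a+bi$ with $a,b\neq 0$ then $\{\pm\overline{\lambda_i}\}$ would be an independent second pair and the kernel would have dimension at least two. Combined with (b), this makes the hypotheses in (c) equivalent to Liu's conditions. For the final claim, pick a left eigenvector $\mu=\alpha+i\beta$ of $B(p)$ for $i\omega$, with real linear forms $\alpha,\beta$; then $\mu\overline{\mu}=\alpha^2+\beta^2$ spans the kernel, is positive semidefinite, and has rank two because $\alpha,\beta$ must be linearly independent --- otherwise $\mu=(1+ic)\alpha$ for some $c\in\mathbb{R}$ and the eigenvector identity $\mu B=i\omega\mu$ would equate a real linear form with a non-real one, impossible since $\omega\neq 0$.

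I expect the main non-routine point to be the interplay between (b) and (c): the factorization in (b) has a real coefficient $b$ precisely in the regime singled out by the kernel-dimension-one hypothesis of (c), so keeping these logical dependencies transparent --- rather than silently assuming the realness of $b$ throughout --- is the only delicate aspect of the write-up.
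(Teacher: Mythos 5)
Your proof is correct and takes essentially the same route as the paper, whose entire proof is the one-line citation of Remarks~\ref{skrem} and~\ref{liurem}; you simply supply the details that citation leaves implicit (the eigenvalues of $L_{B}|_{S_2}$ as pairwise sums, the exclusion of the diagonal case $2\lambda_i=0$ via invertibility, the observation that $\dim\ker L_{B}|_{S_2}=1$ forces the zero-sum pair to be real or purely imaginary, and the rank-two argument for $\psi=\alpha^2+\beta^2$). Your closing remark about not silently assuming realness of $b$ in part (b) is a fair point of care that the paper glosses over.
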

\begin{proof} This follows from Remarks \ref{skrem} and \ref{liurem}.
\end{proof}
\begin{definition}
\begin{enumerate}[(i)]
\item We call $p^*$ a {\em weakly critical parameter value} of system \eqref{odegen} if condition \eqref{critcond} holds at $p=p^*$.
\item If in addition the characteristic polynomial of $B(p^*)$ satisfies the conditions from Lemma \ref{critlem}(b) then we call $p^*$ a {\em critical parameter value} of system \eqref{odegen}.
\end{enumerate}
\end{definition}
We illustrate the conditions of  Lemma \ref{critlem} for small degrees.
\begin{example}\label{polyex}
\begin{itemize}
\item For degree $n=2$, one just has $\chi(\tau)=\tau^2+c_2$, with $c_2>0$.
\item Consider $\chi(\tau)=\tau^3+c_1\tau^2+c_2\tau+c_3$; thus $n=3$. A straightforward computation shows that $\chi$ satisfies the conditions from Lemma \ref{critlem}(a) if and only if $c_3=c_1c_2\not=0$ (or $\Delta_2=0$). This yields a factorization
\[
\chi=(\tau+c_1)(\tau^2+c_2)
\]
and the relevant setting for Hopf bifurcations is determined by $c_1>0$ and $c_2>0$.
\item For $n=4$, hence $\chi(\tau)=\tau^4+c_1\tau^3+c_2\tau^2+c_3\tau +c_4$, the conditions from Lemma \ref{critlem}(a) hold only if
\[
c_1>0, c_4>0 \text{  and  } -c_1^2c_4+ c_1c_2c_3-c_3^2=0\quad( \text{or  } \Delta_3=0)
\]
with factorization
\[
\chi=\left(\tau^2+c_1\tau+(c_2-c_3/c_1)\right)\cdot\left(\tau^2+c_3/c_1\right).
\]
The relevant conditions for Hopf bifurcations hold if and only if the coefficients of both factors are $>0$.
\end{itemize}
\end{example}
At critical parameter values of \eqref{odegen}, the eigenvalues of $B(p^*)$ ensure that a transformation to NFIM \eqref{nfim} up to degree 3 exists for \eqref{odeeps}, given any choice of $v$; there remains to determine $\alpha$ and $\beta$, which will be done in the next subsection. There is a geometric interpretation: In parameter space $\mathbb R^m$, the condition from Lemma \ref{critlem}(a) defines a hypersurface, and the Hurwitz-Routh inequalities from part (b) determine a semi-algebraic subset of this hypersurface. The condition $\alpha\not=0$ in \eqref{nfim} means that for some $v$ the curve $\varepsilon\mapsto p^*+\varepsilon v$ in parameter space crosses the hypersurface transversally. (This corresponds to Liu's partial derivative condition on $\Delta_{n-1}$.)

%%%%-----------------------------Remark to Wei-Min Liu---------------------------------

\begin{remark}\label{liucondrem}
We compare Liu's \cite{Liu} approach to computing critical parameter values with the one given in Lemma \ref{critlem}. Thus let \eqref{polygen} represent the characteristic polynomial of $B=B(p))$.
Liu computes the determinant $\Theta=\Delta_{n-1}$ of the Hurwitz-Routh matrix
\begin{align*}
L_{n}:=
 \begin{bmatrix} c_{n-1} & c_{n} & 0 & 0 & 0 & \cdots & 0 \\ c_{n-3} & c_{n-2} & c_{n-1} & c_{n} & 0 \cdots & 0 \\ \vdots & \vdots & \vdots & \vdots & \vdots & \cdots & \vdots \\ 0 & 0 & 0 & 0 & 0 & \cdots & 1 \end{bmatrix}
\end{align*}
of size $n\times n$, where $c_{n}:=1$ and $c_{i}=0$ for all $i<0$ and $i\geq n+1$. Taking the approach from Lemma \ref{critlem} one determines the characteristic polynomial of $L_{B}|_{S_{2}}$ and considers its constant coefficient.  In terms of eigenvalues (i.e. roots of $\chi$) this constant coefficient is given by 
\[
 \left(\prod\limits_{1\leq i<j\leq n}^{}(\lambda_{i}+\lambda_{j})\right)\cdot \left(\prod\limits_{i=1}^{n}2\lambda_{i}\right).
\]
However, here it suffices to consider the first factor $\widetilde \Theta$ because all eigenvalues are nonzero. (One also obtains a corresponding factorization when the constant coefficient is rewritten in terms of the $c_i$.) One verifies that $\Theta$ and $\widetilde\Theta$ (as polynomials in the $c_i$) are irreducible, have the same degree, and have the same set of complex zeros; hence they coincide by Hilbert's Nullstellensatz, up to a constant factor. (Details will be given in \cite{Kru}.)\\
The approach by Liu is clearly more efficient in terms of computational expense when only critical parameter values are to be determined. However, we will need the annihilating polynomials of $L_B$ on $S_k$ for $k=2,\,3,\,4$ later on. In particular, from the annihilating polynomial of $L_B$ on $S_2$ we find an element in the kernel, which is the quadratic form needed in the reduction.

\end{remark}

%%%%%%%%%%%%%%%%%%%%%%%%%%%%%%%%%%%%%%%%%%%%%%%%%%
\subsection{Restatement of the Hopf conditions}
Given a critical parameter value $p^*$ we introduce the abbreviation
\begin{equation}\label{fdef}
f(x):=h(x,p^*), 
\end{equation}
and furthermore for $(x_0,x)^{\rm tr}\in\mathbb R\times \mathbb R^n$, and $v\in\mathbb R^m$ as in \eqref{odeeps} we set 
\begin{equation}\label{Fdef}
F_v(x_0,x):=\begin{pmatrix} 0\\h(x,\,p^*+x_0 v)\end{pmatrix},\quad C(p^*):=DF_v(0,\,0).
\end{equation}
We will briefly write $F(x_0,x)$ instead of $F_v(x_0,x)$ when the context is clear.
\begin{proposition}\label{hopfconrestate} There is a Hopf bifurcation of system \eqref{odegen} emanating from a parameter $p^*$, for system \eqref{odeeps}, with $\alpha\not=0$ and $\beta\not=0$ in the NFIM \eqref{nfim}, if and only if:
\begin{enumerate}[(i)]
\item The conditions from Lemma \ref{critlem}(c) hold.
\item With $f$ given in \eqref{fdef} and $\psi$ as in Lemma \ref{critlem}(c) there is a smooth function
\[
\widehat\psi=\psi+\text{higher order terms in  }x
\]
such that 
\begin{equation}\label{betacon}
L_f(\widehat\psi)=2\beta \psi^2+\text{h.o.t.}
\end{equation}
\item Letting $F=F_v$ as in \eqref{Fdef}, there is a smooth function
\[
\widehat\theta=\psi+\text{higher order terms in  }(x_0,\,x)
\]
such that 
\begin{equation}\label{alphacon}
L_F(\widehat\theta)=2\alpha x_0\psi+\text{h.o.t.}
\end{equation}
\end{enumerate}
\end{proposition}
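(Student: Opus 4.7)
My plan is to reduce Proposition~\ref{hopfconrestate} to a direct computation in NFIM coordinates and then transport the result back via the covariance of the Lie derivative under diffeomorphisms. Condition~(i) is, by Lemma~\ref{critlem}(c), precisely the spectral hypothesis that permits the construction of \eqref{nfim}: a pair $\pm i\omega$ on the imaginary axis with all remaining eigenvalues having negative real part. After a linear conjugation $M$ of $B(p^*)$ to block form followed by the near-identity normalization, let $x=T(y)$ denote the composite coordinate change, let $\tilde f$ and $\tilde F$ be the transformed vector fields, and set $\tilde\psi(y):=y_1^2+y_2^2$, which by Remark~\ref{skrem} spans the one-dimensional $\ker L_{B_{\text{nfim}}}|_{S_2}$.

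For the direction (Hopf bifurcation $\Rightarrow$ (ii),(iii)), I verify the two identities directly in NFIM coordinates with $\widehat\psi=\widehat\theta=\tilde\psi$. Substituting \eqref{nfim} into $L_{\tilde f}(\tilde\psi)=2y_1\dot y_1+2y_2\dot y_2$ and the analogous expression for $\tilde F$, treating $x_0$ with weight~1, gives
\[
L_{\tilde f}(\tilde\psi) = 2\beta\,\tilde\psi^2 + \text{h.o.t.}, \qquad L_{\tilde F}(\tilde\psi) = 2\alpha\,x_0\tilde\psi + 2\beta\,\tilde\psi^2 + 2\rho\,x_0^2\tilde\psi + \text{h.o.t.},
\]
whose leading term at total degree~$3$ is $2\alpha\,x_0\tilde\psi$ and whose remaining terms all have degree $\geq 4$. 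Pulling back through $T$, define $\widehat\psi(x):=\tilde\psi(T^{-1}(x))$ and $\widehat\theta(x_0,x):=\tilde\psi(T^{-1}(x))$. The chain rule yields $L_f(\widehat\psi)(x)=L_{\tilde f}(\tilde\psi)(T^{-1}(x))$ and the analogue for $\widehat\theta$, so the leading Lie-derivative identities transfer intact. The quadratic part of $\widehat\psi$ is $\tilde\psi\circ M^{-1}$; since linear conjugation preserves the kernel of $L_B|_{S_2}$, this quadratic part spans $\ker L_{B(p^*)}|_{S_2}$, and by one-dimensionality (Lemma~\ref{critlem}(c)) and a positive rescaling of $\psi$ it coincides with $\psi$. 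Thus $\widehat\psi=\psi+\text{h.o.t.}$ and $\widehat\theta=\psi+\text{h.o.t.}$, proving (ii), (iii).

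For the converse, given $\widehat\psi$ fulfilling (ii), pulling back through $T$ gives $L_{\tilde f}(\widehat\psi\circ T) = 2\beta\,\tilde\psi^2 + \text{h.o.t.}$; subtracting the NFIM identity with coefficient $\beta_{\text{nfim}}$ forces the degree-$4$ part of $L_{\tilde f}$ applied to $(\widehat\psi\circ T)-\tilde\psi$ to equal $2(\beta-\beta_{\text{nfim}})\tilde\psi^2$. Because $f_2=0$ in the NFIM, this degree-$4$ component equals $L_{B_{\text{nfim}}}$ of the quartic part of the difference and hence lies in $\operatorname{image}(L_{B_{\text{nfim}}}|_{S_4})$; yet $\tilde\psi^2$ spans the one-dimensional $\ker L_{B_{\text{nfim}}}|_{S_4}$, which is complementary to the image because $0$ is a simple eigenvalue of that operator. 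Therefore $\beta=\beta_{\text{nfim}}$, and an analogous argument at total degree~$3$ identifies $\alpha=\alpha_{\text{nfim}}$. The main obstacle is precisely this uniqueness step --- showing that the coefficients $\alpha,\beta$ extracted from (ii), (iii) are independent of the chosen $\widehat\psi,\widehat\theta$ --- which rests on the decomposition $S_k=\ker L_B\oplus\operatorname{image}\,L_B$ in the relevant weight blocks, a consequence of $0$ being a simple eigenvalue of $L_B|_{S_k}$ in the Hopf-critical spectral configuration.
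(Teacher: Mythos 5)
Your argument is correct and follows essentially the same route as the paper: verify the Lie-derivative identities for $\psi=x_1^2+x_2^2$ directly on the truncated NFIM \eqref{nfim} (where they follow from \eqref{redeq}), then transport them back through the normalizing transformation using $L_f(\psi\circ\Gamma^{-1})=L_{\widehat f}(\psi)\circ\Gamma^{-1}$. You go a step further than the paper's proof by making the converse direction explicit, i.e.\ showing that the coefficients $\alpha,\beta$ read off from any admissible $\widehat\psi,\widehat\theta$ must agree with the normal-form coefficients; the paper leaves this implicit and only supplies the underlying kernel--image decomposition later, in Propositions \ref{betaprop} and \ref{alphaprop}, so this is a worthwhile addition. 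One small imprecision in that step: the claim ``$f_2=0$ in the NFIM'' is too strong for $n>2$, since the components $\dot x_3,\dots,\dot x_n$ may well carry quadratic terms; what you actually need (and what holds) is that the first two components of the NFIM have no quadratic part, so that $L_{\tilde f_2}(\tilde\psi)=2y_1(\tilde f_2)_1+2y_2(\tilde f_2)_2=0$, whence the cubic discrepancy vanishes by invertibility of $L_{B}|_{S_3}$ and the quartic discrepancy lies in the image of $L_{B}|_{S_4}$.
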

\begin{proof}We have
\[
F(0,x)=\begin{pmatrix} 0\\f(x)\end{pmatrix}.
\]
Denote by $\widehat F(x_0,x)$ a NFIM \eqref{nfim}  of $F$ up to degree three.
Then 
part (i) is a direct consequence of Lemma \ref{critlem}. We proceed to prove (ii) and (iii). 
After normalization we have in particular
\[
\widehat F(0,x)=\begin{pmatrix} 0\\\widehat f(x)\end{pmatrix}
\]
with
\[
\widehat f(x)=\begin{pmatrix}-\omega x_2+(x_1^2+x_2^2)(\beta  x_1-\gamma x_2)\\
\omega x_1+(x_1^2+x_2^2)(\beta  x_2+\gamma x_1)\end{pmatrix}
\]
and $\psi(x)=x_1^2+x_2^2$ as well as 
\[
L_{\widehat f}(\psi)(x) =2\beta\psi(x)^2+\cdots,\quad L_{\widehat F}(\psi)(x) =2\alpha x_0\psi(x)^2+\cdots
\]
in view of \eqref{redeq}.
Since $\widehat f$ is a NFIM of $f$ up to degree three, there is a near-identity transformation
\[
x\mapsto \Gamma(x) =x + \text{h.o.t.},\quad \Gamma^{-1}(x) =x + \text{h.o.t.}
\]
such that
\[
\widehat f(x)=D\Gamma(x)^{-1}\,f(\Gamma(x)).
\]
Defining
\[
\widehat \psi:=\psi\circ\Gamma^{-1},
\]
the behavior of the Lie derivative under transformations implies that
\[
L_f(\widehat\psi)=L_{\widehat f}(\psi)\circ\Gamma^{-1}
\]
and we obtain
\[
L_f(\widehat\psi)=(2\beta\psi^2+\text{h.o.t.})\circ\Gamma^{-1}=2\beta\psi^2+\text{h.o.t.},
\]
as asserted in (ii). The proof of condition (iii) is similar. There is a near-identity normalizing transformation
\[
\begin{pmatrix}x_0\\x\end{pmatrix}\mapsto \Phi(\begin{pmatrix}x_0\\x\end{pmatrix}) =\begin{pmatrix}x_0\\x+ \text{h.o.t.}\end{pmatrix}
\]
such that
\[
\widehat F(\begin{pmatrix}x_0\\x\end{pmatrix})=D\Phi(\begin{pmatrix}x_0\\x\end{pmatrix})^{-1}\,F(\Phi(\begin{pmatrix}x_0\\x\end{pmatrix}))
\]
By construction $x_0\circ \Phi^{-1}=x_0$. Defining
\[
\widehat \theta=\psi\circ\Phi^{-1}.
\]
the same arguments as above show that
\[
L_F(\widehat\theta)=L_{\widehat F}(\psi)\circ\Phi^{-1}=2\alpha x_0\psi+ \text{h.o.t.}
\]
\end{proof}
%%%%%%%%%%%%%%%%%%%%%%%%%%%%%%%%%%%
\subsection{Computational matters I: Basic observations}
In this subsection we turn to critical parameter values and to the computation of their associated coefficients $\alpha$ and $\beta$. For a fixed critical parameter value $p^*$ we consider Taylor expansions
\begin{equation}\label{taylor}
\begin{array}{rcccccl}
f(x)&=& B(p^*)x&+&f^{(2)}(x)&+&f^{(3)}(x)+\cdots\\
F(x_0,x)&=& C(p^*)\begin{pmatrix}x_0\\x\end{pmatrix}&+& F^{(2)}(x_0,x)&+  &\cdots 
\end{array}
\end{equation}
with $f^{(i)}$, resp. $F^{(i)}$ homogeneous of degree $i$.
Note the block structure of
\[
C(p^*)=\begin{pmatrix} 0&0\\ 0&B(p^*)\end{pmatrix}.
\]
\begin{proposition}\label{betaprop}Let $p^*$ be a critical parameter value and let $\psi\not=0$ be a positive semidefinite quadratic form in the kernel of $L_{B(p^*)}$. Then $L_{B(p^*)}$ is invertible on $S_3$ and admits a one-dimensional kernel on $S_2$ and on $S_4$. The kernel is spanned by $\psi$ resp. by $\psi^2$.
\begin{itemize}
\item There are homogeneous $\widehat \psi_j$ of degree $j\in\{3,4\}$ and a scalar $\beta$ such that with $\widehat\psi=\psi+\widehat\psi_3+\widehat\psi_4$ one has
\[
L_f(\widehat\psi)= 2\beta\psi^2+\text{h.o.t.}
\]
\item This identity amounts to 
\[
L_{B(p^*)}(\psi)=0 \text{  in degree  } 2
\]
(which holds by definition),
\begin{equation}\label{betaeq3}
L_{B(p^*)}(\widehat\psi_3)+L_{f^{(2)}}(\psi)=0 \text{  in degree }3,
\end{equation}
and
\begin{equation}\label{betaeq4}
L_{B(p^*)}(\widehat\psi_4)+L_{f^{(2)}}(\widehat\psi_3) + L_{f^{(3)}}(\psi)=2\beta \psi^2  \text{  in degree }4.
\end{equation}
\item Equation \eqref{betaeq3} is a linear equation for $\widehat\psi_3$ which has a unique solution. 
 In equation \eqref{betaeq4} the right-hand side $2\beta\psi^2$ is the kernel component of the kernel-image decomposition of $L_{f^{(2)}}(\widehat\psi_3) + L_{f^{(3)}}(\psi)$ with respect to $L_{B(p^*)}$.
\end{itemize}
\end{proposition}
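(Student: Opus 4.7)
My plan is to read off the kernel of $L_{B(p^*)}$ on $S_2$, $S_3$, $S_4$ from Remark~\ref{skrem}, then expand the desired identity $L_f(\widehat\psi)=2\beta\psi^2+\text{h.o.t.}$ by homogeneous degree, and finally solve the resulting graded sequence of linear equations by means of a kernel-image decomposition on each $S_k$.

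First I would pin down the kernel structure. At a critical parameter value the eigenvalues of $B(p^*)$ are a simple pair $\pm i\omega$ together with $\lambda_3,\ldots,\lambda_n$ of strictly negative real part. By Remark~\ref{skrem} the eigenvalues of $L_{B(p^*)}|_{S_k}$ are the sums $\sum k_j\lambda_j$ with $\sum k_j=k$, so a short case analysis shows that on $S_2$ the only vanishing sum is $i\omega+(-i\omega)$, on $S_3$ no sum vanishes (any contribution from $\lambda_j$ with $j\geq 3$ produces nonzero real part, while $(k_1-k_2)\,i\omega=0$ with $k_1+k_2=3$ is impossible), and on $S_4$ the only vanishing sum is $2\,i\omega+2(-i\omega)$. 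Hence $L_{B(p^*)}|_{S_3}$ is invertible and $\ker L_{B(p^*)}|_{S_k}$ is one-dimensional for $k=2,4$, spanned by the product of the conjugate linear forms associated to $\pm i\omega$ (for $k=2$ this is our $\psi$, positive semidefinite of rank two in real coordinates) and by its square $\psi^2$ for $k=4$, again by Remark~\ref{skrem}.

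Next I would substitute $f=B(p^*)+f^{(2)}+f^{(3)}+\cdots$ and $\widehat\psi=\psi+\widehat\psi_3+\widehat\psi_4$ into $L_f(\widehat\psi)$ and collect homogeneous pieces. The degree-$2$ piece $L_{B(p^*)}(\psi)$ vanishes by construction of $\psi$. The degree-$3$ piece $L_{B(p^*)}(\widehat\psi_3)+L_{f^{(2)}}(\psi)$ must vanish, which is \eqref{betaeq3}; since $L_{B(p^*)}|_{S_3}$ is invertible, this determines $\widehat\psi_3$ uniquely. The degree-$4$ piece $L_{B(p^*)}(\widehat\psi_4)+L_{f^{(2)}}(\widehat\psi_3)+L_{f^{(3)}}(\psi)$ must equal $2\beta\psi^2$, which is \eqref{betaeq4}. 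Equivalence of the original identity with these three graded equations is immediate from matching terms of each degree.

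The step that requires the most care is justifying the final bullet, namely that \eqref{betaeq4} determines $\beta$ as the kernel component of $L_{f^{(2)}}(\widehat\psi_3)+L_{f^{(3)}}(\psi)$. For this I would establish the direct sum decomposition $S_4=\ker L_{B(p^*)}|_{S_4}\oplus\mathrm{Im}\,L_{B(p^*)}|_{S_4}$, equivalently semisimplicity of $0$ as an eigenvalue of $L_{B(p^*)}|_{S_4}$. Writing $B(p^*)=B_s+B_n$ for the Jordan decomposition, the left eigenforms $\mu_1,\mu_2$ associated with the \emph{simple} eigenvalues $\pm i\omega$ are annihilated by $B_n$, so $L_{B_n}(\mu_1^2\mu_2^2)=0$; thus $L_{B(p^*)}$ vanishes identically on the one-dimensional generalized zero eigenspace and the desired decomposition follows. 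Granted this, $\beta$ is uniquely fixed by equating $2\beta\psi^2$ to the projection of $L_{f^{(2)}}(\widehat\psi_3)+L_{f^{(3)}}(\psi)$ onto $\ker L_{B(p^*)}|_{S_4}$, after which $\widehat\psi_4$ exists and is unique modulo multiples of $\psi^2$, completing the argument.
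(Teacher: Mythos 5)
Your argument is correct, and it is worth noting that it reaches the conclusion by a different route than the paper. The paper obtains the existence of $\widehat\psi$ and $\beta$ indirectly: it invokes the existence of a NFIM up to degree three and Proposition \ref{hopfconrestate}, where $\widehat\psi$ is produced as $\psi\circ\Gamma^{-1}$ for a normalizing transformation $\Gamma$; the graded identities \eqref{betaeq3} and \eqref{betaeq4} are then dismissed as ``elementary computations''. You instead construct $\widehat\psi_3$, $\beta$ and $\widehat\psi_4$ directly by solving the graded equations: invertibility of $L_{B(p^*)}|_{S_3}$ gives $\widehat\psi_3$ uniquely, and the splitting $S_4=\ker L_{B(p^*)}|_{S_4}\oplus \mathrm{Im}\,L_{B(p^*)}|_{S_4}$ gives $\beta$ and $\widehat\psi_4$. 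This is self-contained (no appeal to normal form existence) and, importantly, you supply the one justification the paper leaves implicit: that $0$ is a \emph{semisimple} eigenvalue of $L_{B(p^*)}|_{S_4}$, so that the kernel--image decomposition actually exists. Your argument for this -- the generalized zero eigenspace is spanned by $\mu_1^2\mu_2^2$ with $\mu_1,\mu_2$ eigenforms for the simple eigenvalues $\pm i\omega$, hence killed by $L_{B_n}$ -- is exactly the right point, and the same observation is what legitimizes the phrase ``kernel component'' in the statement. The only cost of your approach is that it reproves, in this special case, facts that the paper gets for free from the general normal form machinery of Proposition \ref{hopfconrestate}; the benefit is that the computation of $\beta$ is seen to be well defined without ever constructing a normalizing transformation.
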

\begin{proof}
The NFIM of $F$ is given by \eqref{nfim}. The statements on invertibility and the dimension of the kernel follow from Remark \ref{skrem} and Lemma \ref{critlem}.
The remainder of the proof is a straightforward consequence of Proposition \ref{hopfconrestate} and elementary computations.
\end{proof}
\begin{remark} There is no need to compute $\widehat \psi_4$ in equation \eqref{betaeq4}; it suffices to determine the kernel component.

\end{remark}
Note that the computation of $\beta$ does not depend on $v\in \mathbb R^m$ in \eqref{odeeps}. One obtains $\alpha$ in a similar manner, but in this step the choice of $v$ is relevant. 
\begin{proposition}\label{alphaprop}Let $p^*$ be a critical parameter value such that the eigenvalue conditions from Lemma \ref{critlem}(b) hold, and let $\psi$ be as in Proposition \ref{betaprop}.
\begin{enumerate}[(a)]
\item There is a homogeneous $\widehat \theta_3$ of degree $3$ and a scalar $\alpha$ such that with $\widehat\theta=\psi+\widehat\theta_3$ one has
\[
L_f(\widehat\theta)= 2\alpha x_0\psi+\text{h.o.t.}
\]
\item This identity amounts to 
\[
L_{C(p^*)}(\psi)=0 \text{  in degree }2
\]
(which holds by definition) and
\begin{equation}\label{alphaeq3}
L_{C(p^*)}(\widehat\theta_3)+L_{F^{(2)}}(\psi)=2\alpha x_0\psi \text{  in degree }3.
\end{equation}
\item
Here $2\alpha x_0\psi$ is the kernel component of the kernel-image decomposition of $L_{F^{(2)}}(\psi) $ with respect to $L_{C(p^*)}$.
\end{enumerate}
\end{proposition}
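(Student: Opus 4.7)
The plan is to parallel the proof of Proposition \ref{betaprop}, with the extended vector field $F=F_v$ and the operator $L_{C(p^*)}$ on $S_3$ in place of $f$ and $L_{B(p^*)}$ on $S_4$. Existence in (a) will come directly from Proposition \ref{hopfconrestate}(iii) by truncation; (b) follows from matching degrees in $L_F(\widehat\theta)$; and (c) is obtained from an eigenvalue/kernel analysis of $L_{C(p^*)}|_{S_3}$ combined with the special shape of $F^{(2)}$.

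For (a) and (b), I would invoke Proposition \ref{hopfconrestate}(iii) to produce a smooth $\widehat\theta=\psi+\text{h.o.t.}$ satisfying $L_F(\widehat\theta)=2\alpha x_0\psi+\text{h.o.t.}$, and let $\widehat\theta_3$ be its homogeneous degree-$3$ part; contributions to $L_F(\widehat\theta)$ of total degree $3$ come only from terms of $\widehat\theta$ of degree $\leq 3$. Substituting $F=C(p^*)+F^{(2)}+F^{(3)}+\cdots$ and collecting by total degree in $(x_0,x)$, the degree-$2$ piece is $L_{C(p^*)}(\psi)$; by the block structure of $C(p^*)$ and the fact that $\psi$ depends only on $x$, this equals $L_{B(p^*)}(\psi)=0$. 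The degree-$3$ piece is then precisely \eqref{alphaeq3}.

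For (c), I would exploit the $x_0$-graded decomposition $S_3=\bigoplus_{k_0=0}^{3}x_0^{k_0}S_{3-k_0}^{(x)}$, where $S_j^{(x)}$ denotes homogeneous polynomials of degree $j$ in $x$ alone. Because $\dot x_0=0$, $L_{C(p^*)}$ preserves this grading and, on the piece $x_0^{k_0}S_{3-k_0}^{(x)}$, acts as $L_{B(p^*)}$ on $S_{3-k_0}^{(x)}$. A direct eigenvalue count via Remark \ref{skrem}, using that by Lemma \ref{critlem}(b) the remaining eigenvalues of $B(p^*)$ have negative real parts and that $\pm i\omega$ are semisimple, shows that $L_{B(p^*)}$ is invertible on $S_1^{(x)}$ and on $S_3^{(x)}$ and has one-dimensional kernel $\mathbb R\psi$ on $S_2^{(x)}$; consequently $\ker(L_{C(p^*)}|_{S_3})=\mathbb R\cdot x_0^3\oplus\mathbb R\cdot x_0\psi$, and $S_3=\ker(L_{C(p^*)}|_{S_3})\oplus\mathrm{im}(L_{C(p^*)}|_{S_3})$. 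Next, the standing assumption $h(0,p)\equiv 0$ forces the $x$-components of $F^{(2)}$ to vanish at $x=0$ for every $x_0$, and hence to take the form $f^{(2)}(x)+x_0\,A_1 x$ with $A_1=D_pD_xh(0,p^*)v$; this yields
\[
L_{F^{(2)}}(\psi)(x_0,x)=L_{f^{(2)}}(\psi)(x)+x_0\,D_x\psi(x)\,A_1 x.
\]
The first summand lies in $S_3^{(x)}$, on which $L_{C(p^*)}$ is invertible, so it contributes nothing to the kernel component; the second lies in $x_0 S_2^{(x)}$, whose intersection with the kernel is $\mathbb R\cdot x_0\psi$, and projecting $D_x\psi(x)A_1 x$ onto $\mathbb R\psi$ produces a unique scalar $2\alpha$, giving the kernel component $2\alpha x_0\psi$ asserted in (c).

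The main obstacle is the eigenvalue bookkeeping on $S_3$: one must verify that $\ker(L_{C(p^*)}|_{S_3})=\mathbb R\cdot x_0^3\oplus\mathbb R\cdot x_0\psi$ and observe that the $x_0^3$-direction is automatically absent from $L_{F^{(2)}}(\psi)$ because $F^{(2)}$ has no $x_0^2$-term in its $x$-components. Once these structural facts are in hand, the identification of $2\alpha x_0\psi$ as the kernel component reduces to projecting a single element of $S_2$ onto $\mathbb R\psi$.
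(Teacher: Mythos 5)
Your proposal is correct and follows essentially the same route as the paper: existence and the degree-by-degree identities come from Proposition \ref{hopfconrestate}(iii) by truncation, the kernel of $L_{C(p^*)}|_{S_3}$ is identified as the span of $x_0^3$ and $x_0\psi$ via the eigenvalue count of Remark \ref{skrem}, and the $x_0^3$-direction is excluded because $h(0,p)\equiv 0$ forces the $x_0^2$-term of $F^{(2)}$ to vanish (the paper's ``$c=0$''). Your $x_0$-graded decomposition of $S_3$ and the reduction to projecting $L_{A_1}(\psi)$ onto $\mathbb R\psi$ merely spell out in more detail what the paper summarizes as ``analogous to Proposition \ref{betaprop}'' and records in the subsequent remark.
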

\begin{proof} The proof is analogous to that of Proposition \ref{betaprop}.
We have an additional eigenvalue $\lambda_0=0$ here, therefore the kernel of $L_{C(p^*)}|_{S_3}$ is  two-dimensional, being spanned by $x_0^3$ and $x_0\psi$. But the kernel component of $L_{F^{(2)}}(\psi) $ cannot contain the monomial $x_0^3$. Indeed, since $0$ is a stationary point of system \eqref{odeeps} for all $\varepsilon$, in a representation
\begin{equation}\label{F2decomp}
F^{(2)}(x_0,x)=\begin{pmatrix}0\\f^{(2)}(x)\end{pmatrix}+x_0\begin{pmatrix}0\\Ax\end{pmatrix}+x_0^2\begin{pmatrix}0\\c\end{pmatrix}
\end{equation}
(with linear $A$ and constant $c$) one has necessarily $c=0$.
\end{proof}
\begin{remark} It is not necessary to compute $\widehat \theta_3$ in equation \eqref{alphaeq3}; only the kernel component of $L_{F^{(2)}}(\psi)$ is required. The latter task can be simplified further, since from \eqref{F2decomp} (noting $c=0$) and \eqref{alphaeq3} one sees that only the Lie derivative of $\psi$ with respect to $x_0\begin{pmatrix}0\\Ax\end{pmatrix}$ can contribute to the kernel component, and therefore it suffices to compute the kernel component $2\alpha \psi$ of $L_A(\psi)$.
\end{remark}

%----------------------------Generically alpha not zero----------------------------------------------

\begin{remark}\label{gradrem}
The determination of parameter values which give rise to a Hopf bifurcation starts with critical parameter values, which lie on a hypersurface in parameter space that is given by an equation $\omega(p)=0$ in parameter space; see the paragraph following Example \ref{polyex}. Here $\omega$ could be the Hurwitz determinant $\Delta_{n-1}$ or the constant term of the characteristic polynomial o ${L_{B}|_{S_{2}}}$. Given $p^*$ with $\omega(p^*)=0$ and $v\in\mathbb R^m$, we have
\[
\omega(p^*+\varepsilon v)=\varepsilon\left<{\rm grad}\,\omega(p^*),v\right>+o(\varepsilon)
\]
for any (small) $\varepsilon\in\mathbb R$. Hence there exist $v$ such that $\alpha\not=0$ for system \eqref{odeeps} whenever ${\rm{grad}}\,\omega(p^*)\not=0$.
\end{remark}

%%%%%%%%%%%%%%%%%%%%%%%%%%%%%%%%%%%%%%%%%%
\subsection{Computational matters II: The procedure}
We start again with system \eqref{odegen}, with $h(0,p)=0$ and $B(p)=Dh(0,p)$ for all $p$.
\begin{enumerate}[1.]
\item Annihilating polynomials for $L_{B(p)}|_{S_k}$ for $k\in\{1,2,3,4\}$: 
 The case $k=1$ amounts to finding the minimal polynomial (or the characteristic polynomial) of $B(p)$; from this one successively determines annihilating polynomials on $S_2$, $S_3$ and $S_4$, as described in \cite{MSW}, Section 3.\\
 Note: If one starts from an annihilating polynomial $\mu$ for $L_C$  (with any linear $C$) on $S_1$ then the coefficients of the corresponding anihilating polynomials for $L_C$ on any $S_k$ are polynomials in the coefficients of $\mu$; see \cite{MSW}.
\item Basic tasks (see \cite{MSW}, Prop. 2.1): We abbreviate $V=S_k$ and $T:=L_{B(p)}|_{S_k}$, and let 
\[
q(\tau)=\tau^\ell +\sum_{i=1}^\ell \beta_i\tau^{\ell-i}
\]
be a polynomial such that $q(T)=0$, with the additional condition that either $\beta_\ell\not=0$ (whenever $T$ is invertible) or $\beta_\ell=0\not=\beta_{\ell-1}$.
\begin{itemize}
\item If $\beta_\ell\not=0$ then 
 the solution of the equation $Tv=w$ is given by
\[
w=-\frac{1}{\beta_\ell}(T^{\ell-1}+\sum_{i=1}^{\ell-1}\beta_i T^{\ell-1-i})v.
\]
\item If $\beta_\ell=0$ and $\beta_{\ell-1}\not=0$ then the kernel component of $v$ is given by
\[
\frac{1}{\beta_{\ell-1}}(T^{\ell-2}+\sum_{i=1}^{\ell-2}\beta_i T^{\ell-2-i})v.
\]
\end{itemize}
\item Critical parameter values:  Note that all coefficients of annihilating polynomials are themselves polynomials in the parameters $p$. 
\begin{itemize}
\item Choose an annihilating polynomial for $L_{B(p)}|_{S_1}$ with generically nonzero constant coefficient $\gamma_1(p)$. (If the constant coefficient $\gamma_1$ is zero for any $p$ then no Hopf bifurcation can occur.)
\item Choose an annihilating polynomial for $L_{B(p)}|_{S_2}$ with constant coefficient $\gamma_2(p)$ and degree one coefficent $\gamma_3(p)$. The critical parameters are those zeros of $\gamma_2$ which are not zeros of $\gamma_1$ or $\gamma_3$. (If those conditions cannot be satisfied for any annihilating polynomial then no Hopf bifurcation can occur.)
\end{itemize}
\item Check conditions: Given a critical parameter value $p^*$, verify that the conditions from Lemma \ref{critlem}(c) hold, and determine a positive semidefinite quadratic form $\psi$ in the kernel of $L_{B(p)}|_{S_2}$.
\item Determining $\beta$ for a given $p^*$: Solve equation \eqref{betaeq3} for $\widehat \psi_3$, then determine the kernel component of $L_{f^{(2)}}(\widehat\psi_3) + L_{f^{(3)}}(\psi)$ and divide by $2\psi^2$. (To reduce the expenditure, in the last step one may specialize $x$ to any $\overline x\in \mathbb R^n$ with $\psi(\overline x)\not=0$.)
\item Determining $\alpha$ for a given $p^*$: With the notation of \eqref{taylor}, first define $A$ via
\[
\begin{pmatrix}0\\Ax\end{pmatrix}= F^{(2)}(1,x)-\begin{pmatrix}0\\f^{(2)}(x)\end{pmatrix}
\]
(compare \eqref{alphaeq3}). Then determine the kernel component of $L_A(\psi)$ and divide by $2\psi$. (Again one may specialize $x$ to $\overline x$ to reduce expenditure.)
\end{enumerate}

We add some comments on practical matters and the size of the problems.
In order to compute the coefficients $\beta$ and $\alpha$, it is necessary to first obtain annihilating polynomials of $L_{B}|_{S_{k}}$ for $k\in \{2,3,4\}$. Some simplifications and shortcuts are noted in \cite{Kru}, and furthermore, this work needs to be done only once (see the Example below). Starting from the characteristic polynomial, the degree of an annihilating polynomial of $L_{B}|_{S_{k}}$ is given by the dimension  ${n+k-1}\choose {n-1}$ of the space of homogeneous polynomials of degree $k$ in $n$ variables, which is tolerable for reasonably small $n$. (For $n=3$ we have dimensions $6$, $10$ and $15$, respectively; for $n=4$ we have dimensions $10$, $20$ and $35$.) The presence of parameters complicates the problem, and this is the main reason to work with the annihilating polynomials, instead of more direct methods.
\begin{example}Given the characteristic polynomial $\chi=\tau^3+c_1\tau^2+c_2\tau+c_3$, and $c_1c_2=c_3$, we determine annihilating polynomials $\chi_k$ (in a factorized form) of $L_{B}|_{S_{k}}$ for $k=2,3,4$. 
\begin{equation*}\begin{split}
 \chi_2=&\tau(\tau^2+4c_{2})(\tau+2c_1)(\tau^2+2c_1^2\tau+c_1^2+c_2)\\
 &\ \\
 \chi_3=&(\tau^2+9c_2)(\tau+3c_1)(\tau+c_1)(\tau^2+c_2)\\
                    \cdot & (\tau^4+6c_1\tau^3+(13c_2^2+5c_2)\tau^2+(12c_1^3+18c_1c_2)\tau+4c_1^4+17c_1^2c_2+4c_2^2)\\
 &\ \\
  \chi_4=& \tau(\tau^2+16c_2)(\tau+4c_1)(\tau^2+4c_1\tau+4c_1^2+4c_2)(\tau+2c_1)\\
                    \cdot & (\tau^2+4c_2)(\tau^2+2c_1 \tau+c_1^2+c_2)\\
                    \cdot & (\tau^4+8c_1 \tau^3+(22c_1^2+10c_2)\tau^2+(24c_1^3+56c_1c_2)\tau+9c_1^4+82c_1^2c_2+9c_2^3)
\end{split}\end{equation*}
\end{example}

%%%%%%%%%%%%%%%%%%%%%%%%%%%%%%%%%%%%%%%%%%%%

%%%%%%%%%%%%%%%%%%%%%%%%%%%%%%%%%%

\section{A general approach for polynomial systems}
For system \eqref{odegen} we imposed the restriction that $0$ is stationary for any value of $p$. For general parameter dependent systems this restriction is not welcome, and one would prefer an approach which allows to determine those parameter values for which a Hopf bifurcation occurs at some stationary point. We will outline here how this can be done at least for polynomial systems. The fundamental idea in this respect is due to Errami et al. \cite{EEGSSW} who focussed attention on chemical reaction networks and used methods from real algebra. Here we will outline a variant for arbitrary polynomial systems which focuses on explicit conditions for the parameter values and is similar in spirit to the determination of Tikhonov-Fenichel parameter values in \cite{GWZ}). We consider a system
\begin{equation}\label{odepoly}
\dot x= q(x,p)=\begin{pmatrix} q_{1}(x,p)\\\vdots\\ q_{n}(x,p)\end{pmatrix}
\end{equation}
with polynomial right hand side (in variables and parameters), but no fixed stationary point. We are interested in parameter values such that a Hopf bifurcation occurs at some stationary point of \eqref{odepoly}, and by elimination theory we can determine necessary conditions for these parameter values.  For background and terminology see Cox et al. \cite{CLOS} (in particular Chapter 3).
\begin{proposition}\label{supergen}
For $y\in\mathbb R^n,\,p\in\mathbb R^m$ let $\omega(y,p)=\Delta_{n-1}(Dq(y,p))$ be the Hurwitz determinant of the Jacobian of $q$.
\begin{enumerate}[(a)]
\item If $y^*$ is a stationary point of $\dot x=q(x,p^*)$ from which a Hopf bifurcation emanates then $(y^*,\,p^*)$ is a zero of the ideal
 \[
  I:=\langle q_{1}(y,p),\dots,q_{n}(y,p),\,\omega(y,p) \rangle\subseteq \mathbb{R}[y,p].
 \]
\item Moreover $p^*$ is then a zero of the elimination ideal  
\[
I\cap \mathbb{R}[p].
\]
\end{enumerate}
\end{proposition}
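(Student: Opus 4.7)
The plan is to verify directly that the generators of $I$ all vanish at $(y^*,p^*)$ and then obtain part (b) as a formal consequence of the definition of the elimination ideal.

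For part (a), first I would observe that because $y^*$ is a stationary point of $\dot x = q(x,p^*)$, we have $q_i(y^*,p^*)=0$ for $i=1,\ldots,n$ by definition, so the first $n$ generators of $I$ vanish at $(y^*,p^*)$. The remaining step is to show $\omega(y^*,p^*)=0$. A Hopf bifurcation emanating from $(y^*,p^*)$ requires that $B:=Dq(y^*,p^*)$ have all eigenvalues with real part $\le 0$ and exactly one purely imaginary conjugate pair. By Liu's extension of the Routh-Hurwitz criterion (Remark \ref{liurem}, third item), this forces the Hurwitz determinant $\Delta_{n-1}$ of the characteristic polynomial of $B$ to vanish. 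Hence $\omega(y^*,p^*)=\Delta_{n-1}(Dq(y^*,p^*))=0$, and all generators of $I$ vanish at $(y^*,p^*)$.

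For part (b), the claim follows from part (a) together with the defining property of the elimination ideal. If $g\in I\cap\mathbb{R}[p]$, then by membership in $I$ one has a representation $g=\sum_{i=1}^n a_i(y,p)\,q_i(y,p)+b(y,p)\,\omega(y,p)$ with $a_i,b\in\mathbb{R}[y,p]$. Evaluating this identity at $(y,p)=(y^*,p^*)$, each summand contains a factor that vanishes by part (a), so $g(p^*)=0$. Since this holds for every $g$ in the elimination ideal, $p^*\in V(I\cap\mathbb{R}[p])$.

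The proof encounters no substantial obstacle; both parts reduce to a direct verification once Liu's criterion is in hand. What is worth emphasizing, however, is that the proposition asserts only a necessary condition. The zero set $V(I\cap\mathbb{R}[p])$ may well contain parameter values for which $Dq(y,p)$ has roots off the imaginary axis on the right (violating the inequalities $\Delta_1>0,\ldots,\Delta_{n-2}>0$ of Lemma \ref{critlem}(b)), or for which several purely imaginary pairs exist, or for which the transversality condition $\alpha\neq 0$ from Proposition \ref{alphaprop} fails. The practical payoff is nonetheless significant: computing a Gr\"obner basis of $I$ with respect to an elimination ordering produces a finite system of polynomial equations in $p$ alone, drastically restricting the candidate set of critical parameter values that must then be tested by the finer criteria of Sections 4.1--4.3.
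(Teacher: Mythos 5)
Your proof is correct and follows essentially the same route as the paper: stationarity gives the vanishing of $q_1,\dots,q_n$, Liu's extension of the Routh--Hurwitz criterion (equivalently, Lemma \ref{critlem}) forces $\Delta_{n-1}=0$, and part (b) is the standard evaluation argument for the elimination ideal. Your closing remark that the condition is only necessary matches the caveat the paper itself makes in the subsequent remark.
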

\begin{proof} In order for $y^*$ to be stationary for \eqref{odepoly} at $p=p^*$, $(y^*,p^*)$ must be a common zero of $q_1,\ldots, q_n$. The condition $\omega(y^*,p^*)=0$ is due to Lemma \ref{critlem}. Thus part (a) holds, and part (b) is a direct consequence.
\end{proof}

\begin{remark}
\begin{itemize}
\item The underlying idea is that $q_1(y,p)=\cdots=q_n(y,p)=\omega(y,p)=0$, for fixed $p$, is an overdetermined system for $y$, and its solvability will in general impose nontrivial conditions on $p$ which are given by the elimination ideal.
\item Taking a suitable elimination order, such as the lexicographical ordering with respect to the variables $y_{1},\dots,y_{n}, p_1,\ldots, p_m$, one directly obtains a Groebner basis of the intersection
\[
I\cap \mathbb{R}[p] 
\]
from a Groebner basis of $I$; see Cox et al. \cite{CLOS}.
\item  One may alternatively consider an annihilating polynomial for the action of $L_{Dq(y,p)}$ on $S_2$ and take $\omega(y,p)$ as its constant coefficient; see Remark \ref{liucondrem}.
\item One should keep in mind that computing the elimination ideal and investigating the polynomial conditions in the parameters $p_{1},\ldots,p_{m}$ is just the first step.
For instance, if we consider the case $n=3$ we have to ensure that $c_1>0$, $c_{2}>0$ (in the notation of \eqref{polygen}) which imposes further conditions on the parameters. 
\end{itemize}
\end{remark}

%%%%%%%%%%%%%%%%%%%%%%%%%%%%%%%%%%%%%%%%%%%%%

\section{Applications}

\noindent In this section we discuss some examples which, in addition to illustrating the feasibility of the algorithm, are of interest for applications. The systems are dependent on parameters. While the computations go through with the full set of parameters, the output in some cases is too large to be conveniently readable; for this reason we will then specialize parameters.

\subsection{Dimension two}
 We first consider (as a kind of benchmark problem) a general two-dimensional  vector field
 \begin{align*}
  h(x)=\begin{pmatrix}0 & -d \\ 1 & s\end{pmatrix}\cdot \begin{pmatrix} x_{1} \\ x_{2}\end{pmatrix}+ h_2(x)+h_3(x)+\cdots
 \end{align*}
with
stationary point $0$ and linearization $B$ in Frobenius normal form. (Note that a Frobenius normal form can be obtained by rational operations.) From the characteristic polynomial
\begin{align*}
 \chi_{B}(\tau)=\tau^2-s\tau+d
\end{align*}
the conditions $d>0$ and $s=0$ for critical parameter values can be read off directly.  We allow general quadratic and cubic terms in the Taylor expansion, thus
\[
h_2(x)=\begin{pmatrix}c_{1}x_{1}^2+c_{2} x_{1}x_{2}+c_{3}x_{2}^2\\ d_{1} x_{1}^2+d_{2}x_{1}x_{2}+d_{3}x_{2}^2\end{pmatrix}
\]
and
\[
h_3(x)=\begin{pmatrix}a_1x_1^3+a_2x_1^2x_2+a_3x_1x_2^2+a_4x_2^3\\ b_1x_1^3+b_2x_1^2x_2+b_3x_1x_2^2+b_4x_2^3 \end{pmatrix}
\]
where $a_{i},b_{i}, c_{i},d_{i} \in \mathbb{R}$. 
With $d>0$ and $s=0$ the characteristic polynomial of $L_{B}|_{S_{2}}$ is equal to
\begin{align*}
 \chi_2(\tau)=(\tau^2+4d)\cdot \tau,
\end{align*}
and the kernel of $L_{B}|_{S_{2}}$ is spanned by the positive definite invariant $\psi_{2}:=x_1^2+dx_2^2$. Applying our algorithm, we get the reduced system $\dot z=\beta z^2$, with
\begin{align*}
 \beta=\frac{(-2c_1d_1d^2-d_1d_2d^2+3a_1d^2+b_2d^2+c_1c_2d-d_2d_3d+a_3d+3b_4d+c_2c_3+2c_3d_3)}{4d^2}
\end{align*}
In order to illustrate the computation of the coefficient $\alpha$ for a perturbation of the critical parameter value $p^*$ we keep the $a_i,b_i,c_i,d_i$ constant and consider only  $p^{*}=(d,0)$, with $v=(v_{1},v_{2})$. Thus from 
\begin{align*}
 F_{v}(x_{0},x)=\begin{pmatrix}0 \\ h(x,p^{*}+x_{0}v)\end{pmatrix}
\end{align*}
we obtain
\begin{align*}
A=v_{2}\cdot
 \begin{pmatrix}
  0 & 1 \\ 0 & 1
 \end{pmatrix}.
\end{align*}
The kernel component of $L_{A}(\psi_{2})=2y(v_{1}x_1+dv_{2}x_{2})$ with respect to $L_{B}$ is equal to $v_{2}\cdot \psi_{2}$. Consequently, we have $\alpha=v_{2}$. 
%%%%%%%%%%%%%%%%%%%%%%%%%%%%%%%%%%%%%%%%%%%%%%%%%%

\subsection{The FitzHugh-Nagumo system}
This system (see FitzHugh \cite{Fitz}, Nagumo et al. \cite{Nagumo} and Murray \cite{Mur}, Ch.~7.5)
is a simplified version of the Hodgkin-Huxley system which models the firing of a neuron. From several versions we choose
\begin{align*}
 &\dot x_1=x_1-\frac{x_1^3}{3}-x_2+I&=:q_{1}\\
 &\dot x_2=cx_1-bx_2+a&=:q_{2}
\end{align*}
with parameters $a,b,c,I$. (This is  FitzHugh's \cite{Fitz} original system (1),(2), with $y$ replaced by $-cy$ and variables and some parameters renamed). By Proposition \ref{supergen} we determine necessary conditions on those parameter values for which a Hopf bifurcation emanates from some stationary point.
Thus we have $q_1(y,p)=q_2(y,p)=0$ and the further condition $0=\text{trace}(Dq(y))=1-y_1^2-b=\omega(y,p)$ to impose on the parameter values. Eliminating the variables $y_1,y_2$ in the ideal
\[
 J=\langle q_{1},q_{2},\omega\rangle
\]
 yields the necessary algebraic condition 
 \[
 b^5+3b^4-6b^3c+9b^2I^2-18abI-6b^2c+9bc^2+9a^2-4b^2+12bc-9c^2=0.
 \]
From here on we restrict attention to the case with
\[
0<b<1 \text{  and  } b<c.
\]
Thus, in contrast to most discussions (e.g. in the references cited above) we focus on a parameter region where the $x_1$-nullcline is the graph of a strictly decreasing function, and no slow-fast oscillations are to be expected.
We rewrite the condition as
\[
 I=\frac{(3a\pm \sqrt{-b^5-3b^4+6b^3c+6b^2c-9bc^2+4b^2-12bc+9c^2}}{3b}
\]
With the above restriction on the parameters the unique stationary point is given by 
\[
y_1^*=\sqrt{1-b} \text{  and  }y_2^*=\frac{1}{b}\cdot\left(a+c\cdot \sqrt{1-b}\right).
\]
For both choices of sign in the expression for $I$ we obtain the quadratic invariant $\psi=-2bxy+cx^2+y^2$ which is positive definite (and thus yields a pair of purely imaginary eigenvalues) whenever $b^2<c$. In this case we obtain the reduced system
\[
 \dot z=-\frac{(b^2-2b+c)}{(4(b^2-c)^2)}z^2.
\]
Note that the sign of $\beta=-(b^2-2b+c)/(4(b^2-c)^2)$ may be positive or negative. For $c\geq 1$ it is certainly negative and we have an attracting limit cycle.

%%%%%%%%%%%%%%%%%%%%%%%%%%%%%%%%%%%%%%%%%%

\subsection{A predator-prey system}
 The three dimensional system 
  \begin{equation}\label{prepre}
\begin{array}{rcl}
\dot x_1&=&rx_1(1-x_1)-\gamma x_1x_3\\
\dot x_2&=&-\eta x_2+\delta x_1x_3\\
\dot x_3&=&bx_2-dx_3+\eta x_2-\delta x_1x_3
\end{array}
\end{equation}
was introduced in \cite{KLLW}. It models a predator-prey population with one prey $x_1$ and two stages $x_2$, $x_3$ for the predator. All parameters are $>0$, and only nonnegative solutions are of interest. \\
We first use Proposition \ref{supergen} to determine parameter values for which a Hopf bifurcation may occur at some stationary point. The elimination ideal (computed with the help of {\sc {Singular}} \cite{singular}) is generated by one element
\[
h=h_{1}\cdot h_{2}\cdot h_{3}^2\cdot h_{4}\cdot h_{5}\cdot h_{6}\cdot h_{7}\cdot h_{8}^2\cdot h_{9}^2
\]
with factors 
\begin{align*}
h_1&=\delta+d+\eta\\
h_{2}&=d+\eta\\
h_{3}&=d\\
h_{4}&=-\delta^2 b^3-\delta^2 b^2 d+\delta b^2 d^2-\delta^2 b^2 \eta-\delta^2 b d \eta+3 \delta b^2 d \eta\\
&+3 \delta b d^2 \eta+\delta b^2 \eta^2+3 \delta b d \eta^2+2 \delta d^2 \eta^2-\delta b d \eta r+b d^2 \eta r+b d \eta^2 r+2 d^2 \eta^2 r\\
h_{5}&=-\delta b+d \eta+\delta r+d r+\eta r+r^2\\
h_{6}&=-d+r\\
h_{7}&=-\eta+r\\
h_{8}&=r\\
h_{9}&=\eta.
\end{align*}
The equation $h=0$ defines a hypersurface on parameter space and for each $i$, $h_i=0$ defines an irreducible component of this hypersurface. Hopf bifurcations correspond to curves in parameter space which transversally cross some component. The nonnegativity requirement for parameters implies that $h_1,\,h_2,\,h_3,\,h_8$ and $h_9$ are irrelevant for potential Hopf bifurcations, and one verifies that $h_6$ and $h_7$ correspond to the stationary point $0$ (at which no Hopf bifurcation can take place). This leaves $h_4$ and $h_5$ for
possible Hopf bifurcations at the interior stationary point 
\begin{align*}
 y_1&=\frac{d\eta}{b\delta}\\
 y_2&=\frac{dr}{b\gamma}\left(\frac{b\delta-d\eta}{b\delta}\right)\\
 y_3&=\frac{r}{\gamma}\left(\frac{b\delta-d\eta}{b\delta}\right)
\end{align*}
which exists whenever $b\delta-d\eta>0$. One verifies that $h_4=0$ indeed corresponds to Liu's condition $\Delta_2=0$ at the interior stationary point. Moreover we may set $\gamma=1$ with no loss of generality (since one may scale system \eqref{prepre} via $x_2\mapsto\gamma x_2$, $x_3\mapsto\gamma x_3$), and we will do so in the following.\\
At the interior stationary point, the characteristic polynomial of the Jacobian is given by
$\chi_{B}(\tau)=\tau^3+c_{1}\tau^2+c_{2}\tau+c_{3}$ with 
\begin{align*}
 c_{1}&=\frac{bd\delta+b\delta\eta+d\delta\eta+d\eta r}{b\delta}\\
 c_{2}&=\frac{d\eta r(bd-b\delta+b\eta+2d\eta)}{b^2\delta}\\
 c_{3}&=\frac{d\eta r(b\delta-d\eta)}{b\delta}.
\end{align*}
The condition $h_{4}=0$ is linear in the parameter $r$ and equivalent to
\begin{align*}
 r&=\frac{\delta}{{d\eta(bd-b\delta+b\eta+2d\eta)}}\\
 &\cdot \left(b^3\delta-b^2d^2+b^2d\delta-3b^2d\eta+b^2\delta\eta-b^2\eta^2-3bd^2\eta+bd\delta\eta-3bd\eta^2-2d^2\eta^2\right)
\end{align*}
as long as the denominator does not vanish; this allows to substitute the right hand side for $r$.\\

For further investigation and application of the results from Section \ref{unsuitsec}, we perform a translation, replacing $x_i$ by $y_i+x_i$ such that $(0,0,0)$ is a stationary point of the transformed system. (We keep old names here for new variables.) The following computations are carried out with the help of the software system {\sc{Maple}}.
A quadratic invariant $\psi$ of the linearization (see Lemma \ref{critlem}) is given by
\begin{align*}
  \psi=a_1 x_1^2+a_2 x_1x_2+a_3 x_1x_3+a_4 x_2^2+a_5 x_2x_3+a_6 x_3^2
\end{align*}
with
\begin{align*}
a_{1}&=\delta^2(b^5\delta^2-b^4d^2\delta+2b^4d\delta^2-4b^4d\delta\eta+b^4\delta^2\eta-b^4\delta\eta^2\\
&-b^3d^3\delta+b^3d^3\eta+b^3d^2\delta^2-8b^3d^2\delta\eta+3b^3d^2\eta^2+2b^3d\delta^2\eta-5b^3d\delta\eta^2\\
&+b^3d\eta^3+b^2d^4\eta-4b^2d^3\delta\eta+6b^2d^3\eta^2+b^2d^2\delta^2\eta-7b^2d^2\delta\eta^2+4b^2d^2\eta^3\\
&+3bd^4\eta^2-3bd^3\delta\eta^2+5bd^3\eta^3+2d^4\eta^3),\\
a_{2}&=2\delta b\eta d(b^3\delta-b^2d^2+b^2d\delta-3b^2d\eta\\
&+b^2\delta\eta-b^2\eta^2-3bd^2\eta+bd\delta\eta-3bd\eta^2-2d^2\eta^2),\\
a_{3}&=-2\delta\eta d^2(b^3\delta-b^2d^2+b^2d\delta-3b^2d\eta+b^2\delta\eta\\
&-b^2\eta^2-3bd^2\eta+bd\delta\eta-3bd\eta^2-2d^2\eta^2),\\
a_{4}&=(b+\eta)b^2\eta^2d^2,\\
a_{5}&=2b^2\eta^3d^2,\\
a_{6}&=(b^2\delta-bd^2+bd\delta-3bd\eta-2d^2\eta)\eta^2d^2.\\
\end{align*}
Using this invariant, we compute a truncated reduced system (corresponding to \eqref{betacon}) of the form $\dot z=\beta\cdot z^2+\cdots$ where $\beta$ depends on the parameters $b,d,\delta,\eta$. The general expression for the coefficient $\beta$ is given in the Appendix; one sees that the output is manageable if a bit unwieldy (which is caused by the nature of the problem, not by some arbitrary choice).\\
The missing conditions from Liu \cite{Liu} for a Hopf bifurcation are $c_{1}>0,\,c_{2}>0$. We only pick one special case here, setting
$b=d=r$, and
\[
 \delta=\frac{d^2+5d\eta+6\eta^2+\sqrt{d^4+18d^3\eta+69d^2\eta^2+84d\eta^3+36\eta^4}}{4d+4\eta}
\]
from $h_4=0$.
Choosing $\eta=d=1$, which guarantees existence of the interior stationary point due to $b\delta=\frac{1}{2}\left(3+\sqrt{13}\right)>1=d\eta$, and also guarantees $c_1>0$ and $c_2>0$ (as well as positive semidefiniteness of $\psi$), one finds the coefficient $\beta\approx- 0.03<0$. Altogether, there is a Hopf bifurcation with an attracting limit cycle. (By  Remark \ref{gradrem} we have $\alpha\neq 0$ for some direction $v$.)

\appendix
\section*{Appendix: A coefficient}
We did not write down the parameter dependent coefficient $\beta=\beta(b,d,\delta,\eta)$ in the predator-prey system above, since it takes up some space.  It has the form $\beta=\frac{\mu}{\nu}$ with numerator
\begin{equation*}\begin{split}
\mu=&b^2(b+\eta )\cdot (b^{10}d\delta ^4-b^{10}\delta ^5+b^{10}\delta ^4\eta -3b^9d^3\delta ^3+6b^9d^2\delta ^4-13b^9d^2\delta ^3\eta\\
&-3b^9d\delta ^5+18b^9d\delta ^4\eta -13b^9d\delta ^3\eta ^2-3b^9\delta ^5\eta +6b^9\delta ^4\eta ^2-3b^9\delta ^3\eta ^3\\
&+2b^8d^5\delta ^2-6b^8d^4\delta ^3+3b^8d^4\delta ^2\eta +6b^8d^3\delta ^4-21b^8d^3\delta ^3\eta +4b^8d^3\delta ^2\eta ^2\\
&-2b^8d^2\delta ^5+19b^8d^2\delta ^4\eta -45b^8d^2\delta ^3\eta ^2+4b^8d^2\delta ^2\eta ^3-b^8d\delta ^5\eta\\
&+19b^8d\delta ^4\eta ^2-21b^8d\delta ^3\eta ^3+3b^8d\delta ^2\eta ^4-2b^8\delta ^5\eta ^2+6b^8\delta ^4\eta ^3\\
&-6b^8\delta ^3\eta ^4+2b^8\delta ^2\eta ^5+16b^7d^6\delta \eta -42b^7d^5\delta ^2\eta +97b^7d^5\delta \eta ^2\\
&+42b^7d^4\delta ^3\eta -256b^7d^4\delta ^2\eta ^2+209b^7d^4\delta \eta ^3-22b^7d^3\delta ^4\eta +221b^7d^3\delta ^3\eta ^2\\
&-410b^7d^3\delta ^2\eta ^3+209b^7d^3\delta \eta ^4+6b^7d^2\delta ^5\eta -84b^7d^2\delta ^4\eta ^2+221b^7d^2\delta ^3\eta ^3\\
&-256b^7d^2\delta ^2\eta ^4+97b^7d^2\delta \eta ^5+6b^7d\delta ^5\eta ^2-22b^7d\delta ^4\eta ^3+42b^7d\delta ^3\eta ^4\\
&-42b^7d\delta ^2\eta ^5+16b^7d\delta \eta ^6-8b^6d^8\eta +36b^6d^7\delta \eta -58b^6d^7\eta ^2-64b^6d^6\delta ^2\eta \\
&+368b^6d^6\delta \eta ^2-185b^6d^6\eta ^3+56b^6d^5\delta ^3\eta -680b^6d^5\delta ^2\eta ^2+1273b^6d^5\delta \eta ^3\\
&-321b^6d^5\eta ^4-24b^6d^4\delta ^4\eta +524b^6d^4\delta ^3\eta ^2-2019b^6d^4\delta ^2\eta ^3+1872b^6d^4\delta \eta ^4\\
&-321b^6d^4\eta ^5+4b^6d^3\delta ^5\eta -166b^6d^3\delta ^4\eta ^2+1105b^6d^3\delta ^3\eta ^3-2019b^6d^3\delta ^2\eta ^4\\
&+1273b^6d^3\delta \eta ^5-185b^6d^3\eta ^6+12b^6d^2\delta ^5\eta ^2-166b^6d^2\delta ^4\eta ^3+524b^6d^2\delta ^3\eta ^4\\
&-680b^6d^2\delta ^2\eta ^5+368b^6d^2\delta \eta ^6-58b^6d^2\eta ^7+4b^6d\delta ^5\eta ^3-24b^6d\delta ^4\eta ^4\\
&+56b^6d\delta ^3\eta ^5-64b^6d\delta ^2\eta ^6+36b^6d\delta \eta ^7-8b^6d\eta ^8-84b^5d^8\eta ^2+308b^5d^7\delta \eta ^2\\
&-528b^5d^7\eta ^3-424b^5d^6\delta ^2\eta ^2+2110b^5d^6\delta \eta ^3-1402b^5d^6\eta ^4+264b^5d^5\delta ^3\eta ^2\\
&-2658b^5d^5\delta ^2\eta ^3+5096b^5d^5\delta \eta ^4-1914b^5d^5\eta ^5-68b^5d^4\delta ^4\eta ^2+1270b^5d^4\delta ^3\eta ^3\\
&-4776b^5d^4\delta ^2\eta ^4+5096b^5d^4\delta \eta ^5-1402b^5d^4\eta ^6+4b^5d^3\delta ^5\eta ^2-182b^5d^3\delta ^4\eta ^3\\
&+1270b^5d^3\delta ^3\eta ^4-2658b^5d^3\delta ^2\eta ^5+2110b^5d^3\delta \eta ^6-528b^5d^3\eta ^7+4b^5d^2\delta ^5\eta ^3\\
&-68b^5d^2\delta ^4\eta ^4+264b^5d^2\delta ^3\eta ^5-424b^5d^2\delta ^2\eta ^6+308b^5d^2\delta \eta ^7-84b^5d^2\eta ^8\\
&-364b^4d^8\eta ^3+1040b^4d^7\delta \eta ^3-1918b^4d^7\eta ^4-1032b^4d^6\delta ^2\eta ^3+5182b^4d^6\delta \eta ^4\\
&-4034b^4d^6\eta ^5+400b^4d^5\delta ^3\eta ^3-4110b^4d^5\delta ^2\eta ^4+8532b^4d^5\delta \eta ^5-4034b^4d^5\eta ^6\\
&-44b^4d^4\delta ^4\eta ^3+986b^4d^4\delta ^3\eta ^4-4110b^4d^4\delta ^2\eta ^5+5182b^4d^4\delta \eta ^6-1918b^4d^4\eta ^7\\
&-44b^4d^3\delta ^4\eta ^4+400b^4d^3\delta ^3\eta ^5-1032b^4d^3\delta ^2\eta ^6+1040b^4d^3\delta \eta ^7-364b^4d^3\eta ^8\\
&-832b^3d^8\eta ^4+1728b^3d^7\delta \eta ^4-3504b^3d^7\eta ^5-1088b^3d^6\delta ^2\eta ^4+6024b^3d^6\delta \eta ^5\\
&-5416b^3d^6\eta ^6+192b^3d^5\delta ^3\eta ^4-2504b^3d^5\delta ^2\eta ^5+6024b^3d^5\delta \eta ^6-3504b^3d^5\eta ^7\\
&+192b^3d^4\delta ^3\eta ^5-1088b^3d^4\delta ^2\eta ^6+1728b^3d^4\delta \eta ^7-832b^3d^4\eta ^8-1056b^2d^8\eta ^5\\
&+1408b^2d^7\delta \eta ^5-3304b^2d^7\eta ^6-416b^2d^6\delta ^2\eta ^5+3048b^2d^6\delta \eta ^6-3304b^2d^6\eta ^7\\
&-416b^2d^5\delta ^2\eta ^6+1408b^2d^5\delta \eta ^7-1056b^2d^5\eta ^8-704bd^8\eta ^6+448bd^7\delta \eta ^6\\
&-1440bd^7\eta ^7+448bd^6\delta \eta ^7-704bd^6\eta ^8-192d^8\eta ^7-192d^7\eta ^8)\cdot(bd-b\delta +b\eta +2d\eta )^2
\end{split}\end{equation*}
and denominator
\begin{align*}
 \nu=&4d^2\eta ^2(b\delta -d\eta )\\
 \cdot &(b^6\delta ^2+b^5d^2\delta -2b^5d\delta ^2+b^5\delta ^3-2b^5\delta ^2\eta +b^5\delta \eta ^2-b^4d^4+3b^4d^3\delta -5b^4d^3\eta \\
 &-3b^4d^2\delta ^2+15b^4d^2\delta \eta -7b^4d^2\eta ^2+b^4d\delta ^3-11b^4d\delta ^2\eta+15b^4d\delta \eta ^2-5b^4d\eta ^3\\
 &+b^4\delta ^3\eta-3b^4\delta ^2\eta ^2+3b^4\delta \eta ^3-b^4\eta ^4-7b^3d^4\eta +15b^3d^3\delta \eta -25b^3d^3\eta ^2\\
 &-9b^3d^2\delta ^2\eta +38b^3d^2\delta \eta ^2-25b^3d^2\eta ^3+b^3d\delta ^3\eta-9b^3d\delta ^2\eta ^2\\
 &+15b^3d\delta \eta ^3-7b^3d\eta ^4-18b^2d^4\eta ^2+24b^2d^3\delta \eta ^2-40b^2d^3\eta ^3-6b^2d^2\delta ^2\eta ^2\\
 &+24b^2d^2\delta \eta ^3-18b^2d^2\eta ^4-20bd^4\eta ^3+12bd^3\delta \eta ^3-20bd^3\eta ^4-8d^4\eta ^4)^2\\
 \cdot &(b^6\delta ^2+4b^5d^2\delta -8b^5d\delta ^2+6b^5d\delta \eta +4b^5\delta ^3-8b^5\delta ^2\eta \\
 &+4b^5\delta \eta ^2-4b^4d^4+12b^4d^3\delta -20b^4d^3\eta-12b^4d^2\delta ^2+60b^4d^2\delta \eta\\
 &-31b^4d^2\eta ^2+4b^4d\delta ^3-44b^4d\delta ^2\eta +60b^4d\delta \eta ^2-20b^4d\eta ^3+4b^4\delta ^3\eta\\
 &-12b^4\delta ^2\eta ^2+12b^4\delta \eta ^3-4b^4\eta ^4-28b^3d^4\eta+60b^3d^3\delta \eta -100b^3d^3\eta ^2-36b^3d^2\delta ^2\eta \\
 &+152b^3d^2\delta \eta ^2-100b^3d^2\eta ^3+4b^3d\delta ^3\eta -36b^3d\delta ^2\eta ^2+60b^3d\delta \eta ^3-28b^3d\eta ^4\\
 &-72b^2d^4\eta ^2+96b^2d^3\delta \eta ^2-160b^2d^3\eta ^3-24b^2d^2\delta ^2\eta ^2+96b^2d^2\delta \eta ^3-72b^2d^2\eta ^4\\
 &-80bd^4\eta ^3+48bd^3\delta \eta ^3-80bd^3\eta ^4-32d^4\eta ^4).
\end{align*}

\bigskip\noindent{\bf Acknowledgement.} The first author acknowledges support by the DFG Research Training Group GRK 1632 ``Experimental and constructive algebra''.

%%%%%%%%%%%%%%%%%%%%%%%%%%%%%%%%%

\end{document}